\newtheorem{theo}{Theorem}
\newtheorem{lemma}{Lemma}[section]
\newtheorem{claim}[lemma]{Claim}
\newcommand{\E}{\mathbb{E}}
\newcommand{\size}[1]{\left\lvert #1 \right\rvert}
\newcommand{\norm}[1]{\left\lVert #1 \right\rVert}
\newcommand{\floor}[1]{\left\lfloor{#1}\right\rfloor}
\newcommand{\ceil}[1]{\left\lceil #1 \right\rceil}
\DeclareMathOperator{\twr}{twr}
\renewcommand{\b}{\beta}
\renewcommand{\d}{\delta}
\newcommand{\e}{\epsilon}
\newcommand{\z}{\zeta}
\renewcommand{\l}{\lambda}
\renewcommand{\ss}{\subseteq}
\renewcommand{\ss}[1]{\subseteq_{#1}}
\newcommand{\ssb}{\ss{\b}}
\newcommand{\X}{\mathcal{X}}
\newcommand{\Z}{\mathcal{Z}}
\date{}
\title{A Short Proof of Gowers' Lower Bound for the Regularity Lemma}
\author{
Guy Moshkovitz\thanks{School of Mathematics, Tel-Aviv University, Tel-Aviv, Israel 69978.  Email: {\tt guymosko@tau.ac.il}. Supported in part by ISF grant 224/11.}
\and Asaf Shapira\thanks{School of Mathematics, Tel-Aviv University, Tel-Aviv, Israel 69978. Email: {\tt asafico@tau.ac.il}. Supported in part by ISF Grant 224/11 and a Marie-Curie CIG Grant 303320.}
}
\begin{document}

\maketitle
\begin{abstract}

A celebrated result of Gowers states that for every $\epsilon >0$ there is a graph $G$ so that every
$\epsilon$-regular partition of $G$ (in the sense of Szemer\'edi's regularity lemma) has order given by a tower of exponents of height polynomial in $1/\epsilon$.
In this note we give a new proof of this result that uses a construction and proof of correctness that are significantly simpler
and shorter.

\end{abstract}

\section{Introduction}

Szemer\'edi's regularity lemma asserts that every graph can be partitioned into a bounded number of vertex sets $Z_1,\ldots,Z_k$, so that the graphs between almost
all pairs $(Z_i,Z_j)$ behave ``randomly''. More precisely, given two vertex sets in a graph $G$ let $d_G(A,B)=e(A,B)/|A||B|$ where $e(A,B)$ is the number of
edges in $G$ with one vertex in $A$ and the other in $B$. We say that the pair $(A,B)$ is $\epsilon$-regular if $|d_G(A,B)-d_G(A',B')| \leq \epsilon$ for all $A' \subseteq A$ and $B' \subseteq B$
satisfying $|A'|\geq \epsilon|A|$ and $|B'|\geq \epsilon |B|$. A partition $\Z=\{Z_1,\ldots,Z_k\}$ of the vertex set of a graph is called an {\em equipartition} if all
the sizes of the sets $Z_i$ differ by at most $1$. The {\em order} of an equipartition $\Z$, denoted $|\Z|$, is the number of sets in it ($k$ above).
An equipartition $\Z$ is $\epsilon$-regular if all but $\epsilon k^2$ of the pairs $(Z_i,Z_j)$ are $\epsilon$-regular\footnote{Note that we do not require the sets $Z_i,Z_j$ in the pairs $(Z_i,Z_j)$ to be distinct. Therefore, we do not need a lower bound
on the order of the partition in Theorem \ref{theo:sz}.}.
Szemer\'edi's regularity lemma then states the following.

\begin{theo}[Szemer\'edi \cite{Sz}]\label{theo:sz} For every $\epsilon >0$ there is $M=M(\epsilon)$ so that every graph has an $\epsilon$-regular
equipartition of order at most $M$.
\end{theo}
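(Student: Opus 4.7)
The plan is to run the classical energy-increment (mean-square density) argument. Given any partition $\mathcal{P}=\{V_1,\ldots,V_k\}$ of the vertex set of $G$, define the energy
\[
q(\mathcal{P}) \;=\; \sum_{i,j=1}^{k} \frac{|V_i|\,|V_j|}{n^2}\, d_G(V_i,V_j)^2,
\]
which lies in $[0,1]$ since each density $d_G(V_i,V_j)\in[0,1]$. A short Cauchy--Schwarz / convexity calculation gives two facts: (i) any refinement $\mathcal{P}'$ of $\mathcal{P}$ satisfies $q(\mathcal{P}')\ge q(\mathcal{P})$; and (ii) if a pair $(V_i,V_j)$ is not $\epsilon$-regular, witnessed by subsets $V_i'\subseteq V_i$, $V_j'\subseteq V_j$, then the refinement splitting $V_i$ along $V_i'$ and $V_j$ along $V_j'$ increases the contribution of that pair to the energy by at least $\epsilon^4 \cdot |V_i||V_j|/n^2$.

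The main lemma is then: if $\mathcal{P}$ is not $\epsilon$-regular, then by simultaneously refining every ``bad'' pair using its witnessing subsets, one obtains a refinement $\mathcal{P}^*$ with $q(\mathcal{P}^*) \ge q(\mathcal{P}) + \epsilon^5$, where $\mathcal{P}^*$ has at most $k \cdot 2^k$ parts (each $V_i$ is cut by at most one witness per other class, giving a common refinement of size $\le 2^k$). Starting from the trivial partition $\mathcal{P}_0$ with a single class and iterating, the process must stop within $1/\epsilon^5$ steps since $q\le 1$, yielding an $\epsilon$-regular partition of order at most the tower of twos of height $O(1/\epsilon^5)$.

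To upgrade this partition to an \emph{equipartition}, I would, at the end, cut every class into blocks of a common small size $\lfloor n/M\rfloor$ and collect the leftover vertices into one ``garbage'' class that is then redistributed arbitrarily; a standard calculation shows that if the equal-block size is chosen much smaller than the classes of $\mathcal{P}$, the number of pairs whose regularity is destroyed is at most a $2\epsilon$-fraction, so (after a mild adjustment of the parameter, e.g.\ starting with $\epsilon/4$) the resulting equipartition is $\epsilon$-regular.

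The main obstacle is step (ii) — producing a clean polynomial-in-$\epsilon$ energy gain from a single irregular pair — together with verifying that the energy increments from the separate irregular pairs truly add up across the simultaneous refinement. This is where Cauchy--Schwarz must be applied carefully with the correct weights $|V_i||V_j|/n^2$; once this is established, the iteration and the passage to an equipartition are routine bookkeeping.
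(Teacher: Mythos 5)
The paper does not prove this theorem: it is Szemer\'edi's regularity lemma, cited from \cite{Sz} as classical background, and the paper's own contribution (Theorem~\ref{theo:main}) is the matching \emph{lower} bound for $M(\epsilon)$. So there is no ``paper's own proof'' to compare against; I will instead assess your sketch on its own terms.

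Your outline is the standard energy-increment (mean-square density) proof and it is correct in its broad strokes: the weighted energy $q$ lies in $[0,1]$, it is monotone under refinement, an $\epsilon$-irregular pair refined by its witnesses gains $\ge \epsilon^4\,|V_i||V_j|/n^2$, and a bounded number of $\epsilon^5$-increments forces termination. However, I would push back on your closing remark that the single-pair energy gain is ``the main obstacle'' while the rest is ``routine bookkeeping.'' The single-pair gain is a short computation via the defect form of Cauchy--Schwarz. The step that actually requires care in your plan is the one you defer: you propose to iterate on \emph{arbitrary} partitions and only equipartition at the very end. But the hypothesis ``$\mathcal{P}$ is not $\epsilon$-regular'' in the sense of Theorem~\ref{theo:sz} means that more than $\epsilon k^2$ of the \emph{pairs by count} are irregular, and for a non-equitable partition this does not imply $\sum_{\text{bad}}|V_i||V_j| \ge \epsilon n^2$ (the irregular pairs could all involve tiny classes), so it does not yield the claimed $\epsilon^5$ increment. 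The usual fix is to re-equitize at \emph{every} iteration (chopping the refinement into equal-size blocks plus an exceptional set, absorbing a small energy loss), which is precisely why the standard proof carries an equipartition invariant throughout rather than imposing it at the end. Alternatively one can run the iteration with a weighted, size-aware notion of irregular partition, but then the conversion back to the counting notion at the end needs its own argument. Either route is fine, but it should be built into the induction, not left as an afterthought.
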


Despite its apparent simple statement (and proof) the regularity lemma has become one of the most widely used tools in extremal graph theory, as well as in many other fields (see \cite{KSim} for a survey). Unfortunately, the proof in \cite{Sz} only showed that $M(\epsilon) \leq \twr(O(1/\epsilon^5))$
where $\twr(x)$ is a tower of exponents of height $x$. Hence, the numerous applications of the lemma were all of asymptotic nature and supplied very weak effective bounds.

For a long time it was not known whether the tower-type bound for $M(\epsilon)$ was unavoidable until Gowers proved \cite{Gowers} that (surprisingly) this is indeed the case. Gowers' paper contained two proofs. The ``first'' proof used a simple construction with a short proof of correctness, but it only showed that
$M(\epsilon) \geq \twr(c\log (1/\epsilon))$. The ``second'' proof established the much stronger bound $M(\epsilon) \geq \twr(1/\epsilon^c)$ thus showing that
$M(\epsilon)$ indeed grows as a tower of exponents of height polynomial in $1/\epsilon$ (\cite{Gowers} obtains $c=1/16$).
However, the second proof of the stronger bound used a far more complicated construction with a significantly more involved proof of correctness, and was dubbed a tour-de-force in the laudatio to Gowers' Fields medal \cite{B}. Conlon and Fox \cite{CF} gave another proof of the fact that $M(\epsilon) \geq \twr(1/\epsilon^c)$ (with $c=1$), but their proof was equally involved.

While the proof of Gowers' first construction used an inductive approach, in the concluding remarks to his paper \cite{Gowers} he explained that ``the proof for the second construction is so much more complicated than the proof for the first'' since one cannot use a similar inductive approach in the second construction. Our main contribution here is a new proof that $M(\epsilon) \geq \twr(1/\epsilon^c)$ (we obtain $c=1/6$).
At a high level, our proof is almost identical to the first proof in \cite{Gowers} using a very similar inductive approach. However, the proofs differ is several subtle aspects which make it possible to execute the inductive argument $1/\epsilon^c$ times and not only $\log (1/\epsilon)$ times as in the first proof of \cite{Gowers}.
We finally note that the second construction in \cite{Gowers} as well as the one in \cite{CF} prove lower bounds for weaker versions of the regularity lemma. It would be interesting to see if one could use the ideas in our new proof to give simple proofs of comparable lower bounds for weaker versions of the regularity lemma.

Let us say that an equipartition $\Z=\{Z_1,\ldots,Z_k\}$ is $\epsilon$-{\em nice} if for every $Z \in \Z$ all but $\epsilon k$ of the sets $Z' \in \Z$ are
such that $(Z,Z')$ is $\epsilon$-regular. Let $M'(\epsilon)$ be so that every graph has an $\epsilon$-nice equipartition of order at most $M'(\epsilon)$.
It is a well-known (and easy) observation that $M'(\epsilon) \leq M(\epsilon^{3})$. Hence, to prove that $M(\epsilon) \geq \twr(1/\epsilon^{1/6})$ it would suffice to prove the following.

\begin{theo}\label{theo:main}
There is a constant $c>0$ so that $M'(\epsilon) \geq \twr(c/\e^{1/2})$ for every $0<\epsilon<c$.
\end{theo}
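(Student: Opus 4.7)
The plan is to follow the inductive strategy of Gowers' first proof, but with a more refined quantitative invariant that loses only $O(\sqrt{\epsilon})$ per level rather than a constant factor, so that the induction can be carried out $\Theta(1/\sqrt{\epsilon})$ times rather than $\Theta(\log(1/\epsilon))$ times. Concretely, I would construct, for each sufficiently small $\epsilon$, a graph $G$ together with a canonical sequence of nested equipartitions $\P_0 \prec \P_1 \prec \cdots \prec \P_t$ with $t = \Theta(1/\sqrt{\epsilon})$ and $|\P_t|$ a tower of height $t$. The inductive statement to prove is that every $\epsilon$-nice equipartition $\Z$ of $G$ approximately refines $\P_i$ with some error parameter $\eta_i$, meaning that all but an $\eta_i$-fraction of the parts $Z \in \Z$ have all but an $\eta_i$-fraction of their vertices lying in a single cell of $\P_i$. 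Once $\eta_t$ is kept below an absolute constant strictly less than $1$, this forces $|\Z| \geq (1-o(1))|\P_t| \geq \twr(\Omega(1/\sqrt{\epsilon}))$, proving Theorem~\ref{theo:main}.

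For the construction, obtain $\P_{i+1}$ by splitting each cell $V$ of $\P_i$ into many equal sub-cells, and plant between each pair of sub-cells of $V$ a bipartite graph of a carefully chosen density, so that the collection of planted densities inside $V$ is well-spread (has $\Omega(1)$ variance across sub-cells, with pairwise gaps of order $1$). The key quantitative tool is a Cauchy--Schwarz budget for $\epsilon$-regularity: if $(A,B)$ is $\epsilon$-regular then the density $d(A',B')$ on any sub-rectangle with $|A'|\geq\epsilon|A|$, $|B'|\geq\epsilon|B|$ deviates from $d(A,B)$ by at most $\epsilon$, and averaging yields that the densities $d(A_r,B_s)$ between parts of any sub-partition have mean-squared deviation $O(\epsilon)$, hence typical deviation $O(\sqrt{\epsilon})$. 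Since the planted densities are separated by much more than $\sqrt{\epsilon}$, the parts of any approximately $\epsilon$-regular $\Z$ must, on average, align with the planted sub-cell structure up to an additional $O(\sqrt{\epsilon})$ error.

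The inductive step then zooms in on a typical cell $V$ of $\P_i$: assuming $\Z$ approximately refines $\P_i$ with error $\eta_i$, the induced trace of $\Z$ on $V$ behaves like an approximately $\epsilon$-nice equipartition of the graph induced on $V$, so the one-step Cauchy--Schwarz argument within $V$ forces this trace to refine the planted sub-partition of $V$ and gives $\eta_{i+1} \leq \eta_i + O(\sqrt{\epsilon})$; after $t = c/\sqrt{\epsilon}$ steps $\eta_t$ remains bounded by a small constant, closing the induction. I expect the main obstacle to be the coupled design of the bipartite building blocks and the precise form of the inductive invariant: the planted densities must survive the $\eta_i$-slack coming from imperfect alignment at level $i$, the one-step loss must genuinely scale as $\sqrt{\epsilon}$ and not as a constant (otherwise one merely recovers Gowers' original $\log(1/\epsilon)$ bound), and the weaker $\epsilon$-niceness hypothesis on $\Z$ must still supply enough regular pairs sitting inside a single cell $V$ to trigger the Cauchy--Schwarz argument. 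Carefully tracking these interacting error terms, and arguing that the witness sets for irregularity can be chosen to respect both $\P_i$ and the planted sub-cells, is where the bulk of the technical work will lie.
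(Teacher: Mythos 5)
Your high-level scaffolding matches the paper's: both build a nested chain of equipartitions $\X_0 \prec \X_1 \prec \cdots \prec \X_s$ with $|\X_s|$ a tower of height $s = \Theta(1/\sqrt{\e})$, and both prove an inductive lemma asserting that any $\e$-nice partition must approximately refine each $\X_r$ in turn. However, the core mechanism you propose is genuinely different from the paper's, and as stated it has gaps that I don't think can be patched without essentially abandoning the plan.

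The main problem is your discrimination device. You want to split each cell $V$ into $M$ sub-cells and plant bipartite densities between the sub-cells of $V$ with ``pairwise gaps of order~$1$''. But $M$ must itself be of tower-type size (that is the whole point), and one cannot fit $M$ — let alone $\binom{M}{2}$ — densities into $[0,1]$ with $\Omega(1)$ pairwise separation. Worse, at level $r$ the construction only has a $\Theta(\sqrt{\e})$ density budget to spend (the total weight must stay in $[0,1]$ across $\Theta(1/\sqrt{\e})$ levels), so the per-level density gaps can be at most $\Theta(\sqrt{\e})$, not $\Theta(1)$. The paper circumvents this by \emph{not} distinguishing the $M$ sub-cells of $X_i$ through intra-$X_i$ densities at all. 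Instead it assigns to $X_i$ a family of $m$ bipartitions of its $M$ sub-cells — one bipartition $(A_{i,j},B_{i,j})$ for each other cell $X_j$ — chosen to be $\tfrac{1}{16}$-balanced (Lemma~\ref{lemma:pairwise}). The edge weights of $G_r$ between $X_i$ and $X_j$ are a flat $\d$ on the ``agreement'' pairs, $0$ otherwise. Two sub-cells $X_{i,t}, X_{i,t'}$ are then distinguished not by any single density but by the fact that for $\Omega(m)$ choices of $j$ they land on opposite sides of $(A_{i,j},B_{i,j})$, so that any $Z_0$ straddling them is split nontrivially against many partners $X_j$. That is the content of Lemma~\ref{lemma:biased}: it is an elementary second-moment bound, tailored to balanced bipartitions, that replaces the Cauchy–Schwarz/mean-energy machinery you invoke.

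Two further points where your plan diverges and would need repair. First, you propose to ``zoom in'' on a single cell $V$ and treat the trace of $\Z$ on $V$ as an approximately $\e$-nice partition of $G[V]$. This is not justified: $\e$-niceness of $\Z$ in $G$ gives $Z_0$ at most $\e k$ irregular partners \emph{globally}, but those bad partners could all lie inside $V$, and the fraction of parts of $\Z$ lying in $V$ is tiny. The paper's Lemma~\ref{lemma:main} never localizes to $G[V]$; it exhibits $\ge \e k$ irregular partners by ranging over $\Omega(m)$ ``useful'' cells $X_j \ne X_i$ and, within each, using the cross-cell bipartition to produce witness sets $Z' \ss{} X_j$, $Z^1,Z^2 \ss{} Z_0$ with $d_G(Z',Z^1) - d_G(Z',Z^2) > \d/4 > 2\e$, contradicting regularity. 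Crucially this also requires the observation (equation~\eqref{eq:density}) that the ``future'' graph $F = G_{r+1}+\cdots+G_s$ has flat density against any $A_{j,i}$, so only $G_r$ contributes the detectable gap. Second, your accounting $\eta_{i+1} \le \eta_i + O(\sqrt{\e})$ does not match what actually occurs: in the paper the refinement error grows by $8\e$ per step, and the constraint is that it must stay below $\d/60 = \Theta(\sqrt{\e})$ (so that the bipartition argument still applies), which is exactly what caps the number of levels at $\Theta(1/\sqrt{\e})$. Your plan would need a concrete reason why the per-step loss is $O(\sqrt{\e})$ while the invariant can degrade all the way to a constant; with the Cauchy–Schwarz route (which is closer in spirit to Gowers' second proof and to Conlon–Fox) that bookkeeping is considerably more delicate, and the paper's whole point is to avoid it.
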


\section{Proof of Theorem \ref{theo:main}}

\subsection{Preliminary lemmas}

Suppose $G$ is a weighted complete graph, where each edge $(x,y)$ is assigned a weight $d_G(x,y) \in [0,1]$.
Given two vertex sets $A,B$ in $G$ define the weighted density between $A,B$ by $d_G(A,B)=\sum_{x \in A, y \in B}d_G(x,y)/|A||B|$.
The following claim follows immediately from Chernoff's inequality.

\begin{claim}\label{lemma:weightedGraph}
Let $\zeta > 0$. Suppose $G$ is a weighted complete graph on $n$ vertices with weights in $[0,1]$, and $G'$ is a random graph, where each edge $(x,y)$ is chosen independently to be included in $G'$
with probability $d_G(x,y)$. Then with probability at least $1/2$ we have $|d_{G'}(A,B)-d_G(A,B)| \leq \zeta$ for all sets $A,B$ of size at least $20\zeta^{-2}\log(n)$.
\qed
\end{claim}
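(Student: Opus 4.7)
This is a routine Chernoff-plus-union-bound statement, so my plan is straightforward: first prove concentration for a single pair $(A, B)$, then union bound over all eligible pairs.

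For the first step, fix $A, B$ with $|A|, |B| \geq s := 20\zeta^{-2}\log n$, and expand
\[
|A||B|\cdot d_{G'}(A,B) \;=\; \sum_{x\in A,\, y\in B,\, x\ne y} \mathbf{1}_{xy\in G'} \;=\; \sum_{e} c_e\, \mathbf{1}_{e\in G'},
\]
where the right-hand sum runs over undirected edges $e$ of the underlying $K_n$ and $c_e \in \{0,1,2\}$ counts how many ordered pairs in $A\times B$ underlie $e$. Since the variables $\mathbf{1}_{e\in G'}$ are mutually independent Bernoullis of mean $d_G(e)$, and $\sum_e c_e^2 \leq 2\sum_e c_e \leq 2|A||B|$ (using $c_e\leq 2$), Hoeffding's inequality gives
\[
\Pr\bigl[\,|d_{G'}(A,B) - d_G(A,B)| > \zeta\,\bigr] \;\leq\; 2\exp(-\zeta^2 |A||B|).
\]

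Next I would union bound over all candidate pairs. Using $\binom{n}{a}\binom{n}{b} \leq n^{a+b}$ for the number of pairs of sizes $(a,b)$, the choice of $s$ is calibrated so the Chernoff exponent comfortably dominates: for $a,b \geq s$, one has $ab \geq s(a+b)/2$, hence $\zeta^2 ab \geq \tfrac{1}{2}\zeta^2 s(a+b) = 10(a+b)\log n$. So each term $n^{a+b}\cdot 2 e^{-\zeta^2 ab}$ is at most $2n^{-9(a+b)} \leq 2n^{-18s}$; summing over the at most $n^2$ choices of $(a,b)$ yields a total failure probability well below $1/2$ whenever the claim is non-vacuous (i.e.\ $s \leq n$, which forces $n$ large in terms of $\zeta$).

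I do not expect any real obstacle: this is a textbook application of Chernoff's bound, which is why the paper merely remarks that the claim ``follows immediately''. The only bookkeeping subtlety is the mild correlation when $A\cap B \neq \emptyset$---the same edge indicator can appear twice in the ordered-pair sum---but this is painlessly absorbed into the Hoeffding exponent via the multiplicities $c_e \leq 2$.
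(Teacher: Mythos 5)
The paper gives no proof of this claim, simply asserting that it ``follows immediately from Chernoff's inequality''; your write-out is the standard Chernoff/Hoeffding-plus-union-bound argument that the authors clearly had in mind, and it is correct. The one point worth flagging is that you should note (as you do implicitly) that the claim is trivial when $\zeta\ge 1$ and vacuous when $20\zeta^{-2}\log n > n$, so one may assume $n$ is large; with that, your bound $2\sum_{a,b\ge s} n^{-9(a+b)}\le 2n^{2-18s}<1/2$ closes the argument.
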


A pair of vertex sets $A,B$ in a weighted graph $G$ are $\e$-regular if $|d_G(A,B)-d_G(A',B')| \le \e$ for
all $A' \subseteq A$ and $B' \subseteq B$ satisfying $|A'|\geq \epsilon|A|$ and $|B'|\geq \epsilon |B|$.
Clearly if one can construct a large weighted graph $G$ with
the property that every $\epsilon$-nice equipartition of $G$ is of size $\twr(1/\epsilon^c)$, then an application
of Claim \ref{lemma:weightedGraph} will then give a ``genuine'' graph $G'$ with the same property.
Hence, we will prove our lower bound on $M'(\epsilon)$ with respect to weighted graphs.

If $M$ is an even integer, then a sequence $(A_i,B_i)^m_{i=1}$ of $m$ bipartitions of $[M]$ is called \emph{$c$-balanced}
if for every $i$ we have $|A_i|=|B_i|=M/2$ and for every distinct $t,t' \in [M]$ there are at most $(\frac12+c)m$ values $i$ for which $t,t'$ lie in the same part of $(A_i,B_i)$.

\begin{lemma}\label{lemma:pairwise}
For every $m\geq 1$ and $M=2^{\ceil{m/512}}$ there exists a sequence of $m$ bipartitions of $[M]$ that is $\frac{1}{16}$-balanced.
\end{lemma}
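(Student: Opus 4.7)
The plan is to use the probabilistic method. Choose the $m$ bipartitions $(A_i,B_i)$ independently and uniformly at random among all ordered balanced bipartitions of $[M]$ (i.e.\ pairs with $|A_i|=|B_i|=M/2$), and show that with positive probability the resulting sequence is $\tfrac{1}{16}$-balanced.

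Fix distinct $t,t' \in [M]$. For each $i \in [m]$, the probability that $t,t'$ lie in the same part of $(A_i,B_i)$ equals
\[
p \;=\; \frac{M/2-1}{M-1} \;=\; \frac{1}{2} - \frac{1}{2(M-1)} \;<\; \frac{1}{2}.
\]
Let $X_i^{(t,t')}$ be the indicator of this event. Since the bipartitions are chosen independently, the variables $X_1^{(t,t')},\ldots,X_m^{(t,t')}$ are i.i.d.\ Bernoulli$(p)$, so Hoeffding's inequality yields
\[
\Pr\!\left[\textstyle\sum_{i=1}^m X_i^{(t,t')} > \bigl(\tfrac{1}{2}+\tfrac{1}{16}\bigr) m\right] \;\le\; \exp\!\bigl(-2m\cdot(1/16)^2\bigr) \;=\; \exp(-m/128),
\]
since the deviation from the mean $pm$ is at least $m/16$.

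A union bound over the $\binom{M}{2} \le M^2$ pairs gives a total failure probability of at most $M^2 \exp(-m/128)$. From $M \le 2 \cdot 2^{m/512}$ we get $M^2 \le 4 \exp(m \ln 2 / 256)$, so the failure probability is bounded by $4 \exp\!\bigl(m(\tfrac{\ln 2}{256} - \tfrac{1}{128})\bigr)$. Since $\tfrac{\ln 2}{256} < \tfrac{1}{128}$, the exponent is negative, and this quantity is strictly less than $1$ once $m$ exceeds some absolute constant, producing the desired sequence by a standard averaging argument.

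The only thing to verify separately is the small-$m$ regime. For $1 \le m \le 512$ we have $\lceil m/512 \rceil = 1$ and hence $M=2$; the only balanced bipartition of $[2]$ is $(\{1\},\{2\})$, and repeating it $m$ times is trivially $\tfrac{1}{16}$-balanced since the sole pair $\{1,2\}$ is never in the same part. I expect no real obstacle here beyond bookkeeping the constants; the probabilistic argument has substantial slack (roughly a factor of $3$ in the exponent between $\ln 2/256$ and $1/128$), which is what the generous constant $512$ in the definition of $M$ is designed to provide.
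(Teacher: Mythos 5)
Your proof is correct and follows essentially the same approach as the paper's: choose the $m$ balanced bipartitions independently and uniformly at random, bound the probability that a fixed pair $t,t'$ co-occurs too often via Chernoff/Hoeffding ($\exp(-m/128)$ in both), take a union bound over pairs, and handle the small-$m$ regime ($M=2$) by hand. The only cosmetic difference is that you express the single-bipartition probability as $(M/2-1)/(M-1)$ while the paper writes it as $2\binom{M-2}{M/2-2}/\binom{M}{M/2}$; these are equal.
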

\begin{proof} If $m\leq 512$ then $M=2$ and we can just take $m$ identical copies of the partition $A=\{1\}$ and $B=\{2\}$.
Suppose now that $m>512$. We choose, uniformly at random, $m$ bipartitions of $[M]$ into two sets of equal size, with the choices being mutually independent.
Fix $t\neq t' \in[M]$. The probability that $t$ and $t'$ are in the same part of a given bipartition is $2\binom{M-2}{M/2-2}/\binom{M}{M/2} \le 1/2$.
By Chernoff's inequality, the union bound, and the fact that $m>512$, the probability that some pair $t \neq t' \in [M]$ belongs to the same part for more than $9m/16$ of the bipartitions is at most ${M \choose 2} \exp(-2m/256) < 1$, so the required sequence of partitions exists.
\end{proof}

\begin{lemma}\label{lemma:biased}
If $(A_i,B_i)^m_{i=1}$ is a sequence of bipartitions of $[M]$ that is $\frac{1}{16}$-balanced, then for every $\l=(\l_1,\ldots,\l_M)$ with $\l_t \ge 0$, $\norm{\l}_{1}=1$, and $\norm{\l}_{\infty} \le 1-8\z$, at least $m/6$ of the bipartitions $(A_i,B_i)$ satisfy $\min(\sum_{t\in A_i} \l_t,\sum_{t\in B_i} \l_t) \ge \z$.
\end{lemma}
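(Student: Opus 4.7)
Write $a_i=\sum_{t\in A_i}\l_t$ and $b_i=1-a_i$, call a coordinate $t$ \emph{heavy} if $\l_t\ge\z$, and let $H$ be the set of heavy coordinates. The plan is to split the argument into two cases according to $|H|$.

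\textbf{Case $|H|\ge 2$:} I would pick any two heavy coordinates $t_1,t_2$ and argue combinatorially. Whenever $(A_i,B_i)$ is ``bad,'' i.e.\ $\min(a_i,b_i)<\z$, the smaller side has total $\l$-mass below $\z$ and hence cannot contain any heavy coordinate; so both $t_1$ and $t_2$ sit in the larger side, which is to say in the same part. By the $\frac{1}{16}$-balanced hypothesis this happens for at most $9m/16$ indices $i$, leaving at least $7m/16>m/6$ good bipartitions.

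\textbf{Case $|H|\le 1$:} Here I would run a second-moment argument on a renormalized vector $\mu$ that eliminates the (potentially large) heavy coordinate. Concretely, take $\mu=\l$ if $|H|=0$, and otherwise set $\mu_{t^*}=0$ and $\mu_t=\l_t/(1-\l_{t^*})$ for $t\ne t^*$, where $\{t^*\}=H$. One checks that $\norm{\mu}_1=1$, and because $\l_{t^*}\le 1-8\z$ forces $1-\l_{t^*}\ge 8\z$, the key bound $\norm{\mu}_\infty<1/8$ follows. In particular $\norm{\mu}_2^2<1/8$. Setting $\tilde a_i=\sum_{t\in A_i}\mu_t$ and $\tilde b_i=1-\tilde a_i$, I would expand $\sum_i\tilde a_i\tilde b_i$ as a double sum over ordered pairs $(t,t')$ and use that the $\frac{1}{16}$-balanced hypothesis puts any two distinct coordinates in different parts for at least $7m/16$ of the bipartitions, giving
\[
\sum_i\tilde a_i\tilde b_i \;\ge\; \frac{7m}{32}\bigl(1-\norm{\mu}_2^2\bigr) \;>\; \frac{49m}{256}.
\]
At the same time, for every bad $i$ one of $\tilde a_i,\tilde b_i$ is strictly below $1/8$ (namely the ``side not containing $t^*$,'' or either side when $|H|=0$), so $\tilde a_i\tilde b_i<(1/8)(7/8)=7/64$. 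Combining the lower bound with the trivial upper bound $\sum\tilde a_i\tilde b_i\le\tfrac{7}{64}N_{\mathrm{bad}}+\tfrac{1}{4}(m-N_{\mathrm{bad}})$ and rearranging yields $N_{\mathrm{bad}}\le 5m/12$, so at least $7m/12>m/6$ bipartitions are good.

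The step that needs the most care is the translation in the second case: one must verify that $\min(a_i,b_i)<\z$ for $\l$ really does force $\tilde a_i\tilde b_i<7/64$ for $\mu$, by checking where $t^*$ lies in the bipartition and invoking the arithmetic inequality $\z/(1-\l_{t^*})\le 1/8$ coming from $\l_{t^*}\le 1-8\z$. Everything else is the standard symmetric expansion of $\sum_i\tilde a_i\tilde b_i$ plus a one-line linear rearrangement.
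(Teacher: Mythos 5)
Your argument is correct, and it takes a genuinely different route from the paper. The paper's proof is iterative: it first shows (via a second-moment bound $\E[Y^2]\le 1-4\z$ applied directly to $\l$) that any $\frac14$-balanced sequence contains at least one good bipartition, then repeatedly pulls out good bipartitions while verifying that the leftover sequence remains $\frac14$-balanced as long as fewer than $m/6$ have been removed. You instead get the count of good bipartitions in one shot, by splitting on the number of ``heavy'' coordinates. When $|H|\ge 2$, no analysis is needed at all --- the balancedness hypothesis applied to one pair of heavy coordinates already caps the number of bad bipartitions at $9m/16$. When $|H|\le 1$, your renormalization $\mu$ removes the one possible large coordinate and forces $\norm{\mu}_{\infty}<1/8$, after which the quadratic expansion of $\sum_i\tilde a_i\tilde b_i$ (same Hölder-type bound $\norm{\mu}_2^2\le\norm{\mu}_\infty\norm{\mu}_1$ as in the paper) pins down $N_{\mathrm{bad}}$ directly, with no pull-out/re-verify loop. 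The key translation you flag --- that a bad $i$ for $\l$ gives $\min(\tilde a_i,\tilde b_i)<1/8$ for $\mu$ --- does go through: when $|H|=1$ the heavy coordinate $t^*$ can never sit on the light side (its weight alone exceeds $\z$), so the light side's $\mu$-mass is $a_i/(1-\l_{t^*})<\z/(8\z)=1/8$. Your approach is arguably more self-contained (no induction on the sequence length) at the cost of a case split and a slightly less transparent renormalization; the paper's is shorter to state but hides a small bookkeeping step in re-certifying $\frac14$-balancedness. Both approaches actually deliver more than $m/6$ good bipartitions. One tiny wording slip: when $|H|=0$ it is the \emph{lighter} side (not ``either side'') whose $\mu$-mass is below $1/8$, but this does not affect the argument.
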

\begin{proof} Suppose $(A_i,B_i)^m_{i=1}$ is a sequence of partitions of $[M]$ that is $\frac14$-balanced (and not necessarily $\frac{1}{16}$-balanced).
We first show that in this case we can find one bipartition $(A_i,B_i)$ satisfying $\min(\sum_{t\in A_i} \l_t, \sum_{t\in B_i} \l_t) \geq \z$.
Choose one of the partitions $(A_i,B_i)$ in the sequence uniformly at random, and let $Y_t$ be the random variable satisfying $Y_t=1$ if $t\in A_i$, and $Y_t=-1$ if $t\in B_i$.
Clearly, $\E[Y_t^2]=1$ and as the sequence is $\frac14$-balanced, we have $\E[Y_tY_{t'}]\le 1/2$ for every $t\neq t'\in[M]$.
Let $Y=\sum_{t=1}^M \l_t Y_t$. Then
$$\E[Y^2] \leq \sum_t \l_t^2 + \frac12\sum_{t\neq t'} \l_t\l_{t'} =
\frac12\sum_t \l_t^2+\frac12(\sum_t \l_t)^2=\frac12+\frac12\sum_t \l_t^2 \leq 1-4\zeta \;,$$
where in the last inequality we used the fact that $\sum_t\l_t^2 \leq \norm{\l}_{\infty} \cdot \norm{\l}_{1} \leq 1-8\zeta$.
We conclude that $\E[|Y|]\leq 1-2\zeta$, implying that there exists an $i$ for which the bipartition $(A_i,B_i)$ satisfies
$\big\lvert \sum_{t\in A_i} \l_t - \sum_{t\in B_i} \l_t \big\rvert \le 1-2\z$.
Since $\big\lvert \sum_{t\in A_i} \l_t - \sum_{t\in B_i} \l_t \big\rvert=2\bigg\lvert \sum_{t\in A_i} \l_t - \frac12 \bigg\rvert$
this means that $\z \le \sum_{t\in A_i} \l_t \le 1-\z$, implying that
$\min(\sum_{t\in A_i} \l_t, \sum_{t\in B_i} \l_t) \ge \z$, as desired.

Suppose now that our sequence of $m$ bipartitions is $\frac{1}{16}$-balanced. We repeatedly apply the argument from the previous paragraph, where in each step we ``pull out'' a bipartition satisfying $\min(\sum_{t\in A_i} \l_t, \sum_{t\in B_i} \l_t) \ge \z$.
By the claim in the previous paragraph we can do this as long as the remaining set of bipartitions is $\frac14$-balanced.
We claim that as long as we have removed less than $m/6$ of the bipartitions, the remaining sequence is still $\frac14$-balanced.
Indeed, since the original sequence was $\frac{1}{16}$-balanced, if we remove at most $m/6$ bipartitions, then for each pair $t\neq t' \in [M]$
the fraction of bipartitions in which $t,t'$ belong to the same part is at most
$(9m/16)/(5m/6)\leq 3/4$.
\end{proof}

\subsection{The construction}\label{subsec:Construction}

We now describe the weighted graph $G=(V,E)$ on $n$ vertices which, as we will shortly prove, has no small $\e$-nice equipartition.
Henceforth, set $\d = 30\e^{1/2}$, $s=\floor{1/\delta}$, $\phi(m)=2^{\ceil{m/512}}$ and assume $n$ is large enough as a function of $\epsilon$,
and that $0< \epsilon < c$ for some small enough $c$.

Let $\X_0 , \X_1 , \ldots , \X_s$ be a sequence of $s+1$ equipartitions of $V$ each refining the previous one, where $\X_{0}$ is the trivial partition with $\size{\X_0}=1$, such that every part of $\X_{r-1}$ is subdivided into $\phi(\size{X_{r-1}})$ parts in $\X_r$.
Note that $\size{\X_r} = \size{\X_{r-1}}\cdot\phi(\size{\X_{r-1}})$, implying that $\size{\X_r} = \twr(\Omega(r))$.
For each $1\le r\le s$ we now define a weighted graph $G_r$ using the partitions $\X_{r-1}$ and $\X_r$.
For convenience, write $\X_{r-1} =\{X_1,\ldots,X_m\}$ and $\X_r = \{X_{i,t}\}_{i=1,t=1}^{m,M}$
so that the sets $X_{i,1},\ldots,X_{i,M}$ form a partition of $X_i$.
Let $(A'_{j},B'_{j})_{j=1}^m$ be a sequence of $m$ bipartitions of $[M]$ that is $\frac{1}{16}$-balanced, as in Lemma~\ref{lemma:pairwise} (we can choose these bipartitions since $M=\phi(m)=2^{\ceil{m/512}}$).
For each $1 \leq i \leq m$, we assign to $X_i$ a sequence of $m$ bipartitions $(A_{i,j},B_{i,j})_{j=1}^m$ of its vertices by letting $A_{i,j}:=\bigcup_{t\in A'_{j}} X_{i,t}$ and $B_{i,j}:=\bigcup_{t\in B'_{j}} X_{i,t}$ (i.e., we think of each bipartition $(A'_{j},B'_{j})$ as a bipartition of the collection of sets $X_{i,1}\ldots,X_{i,M}$).
Now, for every $u\in X_i$ and $v\in X_j$, the edge $(u,v)$ has a positive weight $\d$ in $G_r$ if and only if $u\in A_{i,j}$ and $v\in A_{j,i}$ or $u\in B_{i,j}$ and $v\in B_{j,i}$. Notice we allow $i=j$ in the above; moreover, we allow (for convenience) self loops. As an example, notice $\size{\X_0}=1$ and $\size{\X_1}=2$,
so $G_1$ is just a vertex-disjoint union of two cliques, each on exactly half the vertices, whose edges are all of weight $\d$.
Finally, define $G=G_1+G_2+\cdots+G_s$, meaning that the final weight assigned to each edge is the sum of the weights assigned to this edge over all graphs $G_1,\ldots,G_s$.
This is well defined as the weight of each edge is at most $s\d \le 1$.

We now state an important observation regarding the graph $G$. 
Fix an integer $1 \leq r \leq s$, a set $X_i \in \X_{r-1}$, a vertex $v \in X_i$ and $1 \leq j \leq m$. Since in the construction above the bipartition $X_j = A_{j,i} \cup B_{j,i}$ satisfies $\size{A_{j,i}}=\size{B_{j,i}}=|X_j|/2$, we see that $d_{G_{r}}(v,X_j)=\delta/2$.
Since each set $X \in \X_{r}$ is a disjoint unions of sets $X' \in \X_{r+1}$ etc. and since the partitions $\X_{1},\ldots,\X_{s}$ are equipartitions, we get that for
every $1 \leq r \leq s$, vertex $v\in V$, and $X \in \X_{r}$ that $d_{G_{r+1}+\cdots+G_s}(v,X)=\frac12\d(s-r)$.
Finally, since the sets $A_{j,i}$ are disjoint unions of sets $X \in \X_{r}$ we get that for every set $A_{j,i}$ and for {\em any} other set of vertices $Z$, we have
\begin{equation}\label{eq:density}
d_{G_{r+1}+\cdots+G_s}(Z,A_{j,i})=\frac12\d(s-r) \;.
\end{equation}

\subsection{Proof of correctness}

We write $A\ssb B$ to denote the fact that $|A\cap B| \geq (1-\beta)|A|$.
We say that a partition $\Z$ is a \emph{$\b$-refinement} of a partition $\X$ if for every $Z\in\Z$ there is an $X\in\X$ such that $Z\ssb X$.
Note that if $\Z$ is a $\b$-refinement of $\X$ with $\b<1/2$ then, in particular, each $Z\in\Z$ satisfies $Z\ssb X$ for a unique $X\in \X$.
In what follows, we only consider $\b$-refinements with $\b<1/2$.
The heart of the proof of Theorem~\ref{theo:main} is the following lemma, in which $G$, $s$ and $\delta$ are those defined above.

\begin{lemma}\label{lemma:main}
Suppose $\b \le \d/60 < 1/2$, and $1 \leq r \leq s$.
If $\Z$ is an $\e$-nice equipartition of $G$ that $\b$-refines $\X_{r-1}$ then it $(\b+8\e)$-refines $\X_r$.
\end{lemma}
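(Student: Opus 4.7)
My plan is a contradiction argument: assume some $Z \in \Z$ with $Z \ssb X_i$ fails to be $(\b+8\e)$-contained in any $X_{i,t}$, and deduce that $Z$ has more than $\e k$ partners $Z' \in \Z$ for which $(Z,Z')$ is not $\e$-regular, contradicting the $\e$-niceness of $\Z$. The starting point is to set $\lambda_t := |Z \cap X_{i,t}|/|Z \cap X_i|$, so that $\|\lambda\|_1 = 1$ and, using $|Z \cap X_i| \geq (1-\b)|Z|$ together with the contradiction hypothesis, $\|\lambda\|_\infty \leq (1-\b-8\e)/(1-\b) = 1 - 8\e/(1-\b)$. Applying Lemma~\ref{lemma:biased} with $\zeta = \e/(1-\b)$ to the $\frac{1}{16}$-balanced family $(A'_j, B'_j)_{j=1}^m$ built into the construction of $G_r$ yields at least $m/6$ ``good'' indices $j$ for which both $A_j := Z \cap A_{i,j}$ and $B_j := Z \cap B_{i,j}$ have size at least $\e|Z|$.

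The heart of the argument is the exact identity, valid for every good $j$: $d_G(A_j, A_{j,i}) - d_G(B_j, A_{j,i}) = \d$. The $G_r$-contribution is $\d$, since every edge in $G_r$ from $A_j \subseteq A_{i,j}$ to $A_{j,i}$ carries weight $\d$ while every edge from $B_j \subseteq B_{i,j}$ to $A_{j,i}$ carries weight $0$. For $k < r$, the sets $X_i$ and $X_j$ each lie inside a single $\X_k$-part, so $d_{G_k}(u,v)$ is constant over $u \in X_i$, $v \in X_j$, and the $A_j$- and $B_j$-contributions match. For $k > r$, equation~(\ref{eq:density}) gives $d_{G_k}(v, A_{j,i}) = \d/2$ for every vertex $v$, hence $d_{G_k}(A_j, A_{j,i}) = d_{G_k}(B_j, A_{j,i}) = \d/2$ and again match.

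The final step converts this density gap into irregular partners via $\Z$. Decomposing $A_{j,i} = \bigsqcup_{Z' \in \Z}(A_{j,i} \cap Z')$ gives
\[ \d \;=\; \sum_{Z' \in \Z} \frac{|A_{j,i} \cap Z'|}{|A_{j,i}|}\cdot\bigl(d_G(A_j, A_{j,i}\cap Z') - d_G(B_j, A_{j,i}\cap Z')\bigr). \]
Whenever $(Z, Z')$ is $\e$-regular and $|A_{j,i} \cap Z'| \geq \e|Z'|$, applying the $\e$-regular definition (justified by $|A_j|, |B_j| \geq \e|Z|$) bounds the corresponding summand by $2\e$, so such ``good'' $Z'$ account for at most $2\e$ of the right-hand side. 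The remaining $\geq \d - 2\e > 0$ mass must then be carried by ``bad'' $Z'$: those that are not $\e$-regular with $Z$, or that have $|A_{j,i} \cap Z'| < \e|Z'|$. Summing this per-$j$ inequality over all $m/6$ good $j$'s and swapping the order of summation, the plan is to use the $\e$-niceness bound (at most $\e k$ non-regular partners of $Z$), together with the elementary bounds $\sum_j |A_{j,i} \cap Z'| \leq |Z'|$ and $|A_{j,i}| = n/(2m)$, to force a contradiction. The main obstacle is exactly this final counting: the ``small-intersection'' bad $Z'$ alone could in principle absorb the $\d - 2\e$ gap for each $j$, so one must exploit the $\frac{1}{16}$-balancedness of Lemma~\ref{lemma:biased}—which yields $\Omega(m)$ parallel witness indices $j$ rather than just one—to push the total count of non-$\e$-regular partners above $\e k$.
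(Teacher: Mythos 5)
Your setup matches the paper's exactly: the contradiction hypothesis, the vector $\lambda$, the application of Lemma~\ref{lemma:biased} to obtain $\ge m/6$ good indices $j$, and the lower bounds $|A_j|,|B_j|\ge\e|Z_0|$ are all correct and are the paper's own starting moves. Your density identity $d_G(A_j,A_{j,i})-d_G(B_j,A_{j,i})=\d$ is also correct (it follows from the $G_r$ structure together with equation~(\ref{eq:density})), and the decomposition of $A_{j,i}$ along $\Z$-parts is a natural next step.

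The gap is exactly where you flag it, and the sum-over-$j$ idea does not close it. Quantitatively: for a fixed good $j$, the sets $Z'$ with $|A_{j,i}\cap Z'|<\e|Z'|$ can contribute as much as $\sum_{Z'}\frac{\e|Z'|}{|A_{j,i}|}=\frac{\e n}{n/(2m)}=2\e m$ to the right-hand side, which dwarfs the target $\d-2\e$. Summing over the $m/6$ good indices and using $\sum_j|A_{j,i}\cap Z'|\le|Z'|$ only improves this to a total small-intersection contribution of at most $\sum_{Z'}\frac{|Z'|}{|A_{j,i}|}=2m$, while the left-hand side is $(m/6)(\d-2\e)\approx m\d/6$. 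Since $\d\ll 1$, the inequality $m\d/6\le 2m+(\text{irregular terms})$ is vacuously true and yields no contradiction. The $\frac{1}{16}$-balancedness gives you $\Omega(m)$ witnesses, but the small-intersection loss is also $\Omega(m)$, so parallelism alone does not win.

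What the paper does instead is avoid comparing $|A_{j,i}\cap Z'|$ to $\e|Z'|$ altogether. It first introduces \emph{useful} vertices ($x\in X_j$ whose containing $Z\in\Z$ satisfies $Z\ss{\b}X_j$) and \emph{useful} sets $X_j$ (those with few non-useful vertices plus the bipartition condition you also derived), and only works within $\Z_j=\{Z\in\Z:Z\ss{\b}X_j\}$. This is crucial because $\sum_{Z\in\Z_j}|Z|\le 2|X_j|=4|A_{j,i}|$, which tames the small-intersection issue. Then it argues vertex-by-vertex in $A_{j,i}$: it defines a bad set $A\subseteq A_{j,i}$ of non-useful vertices, vertices in irregular partners, and vertices $x$ in a regular $Z$ with $d_F(x,Z^2)<d_F(x,Z^1)+\tfrac34\d$. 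The key observation is that a regular $Z\in\Z_j$ cannot contain $\ge\e|Z|$ vertices of the third kind---any such subset $Z'$ would satisfy $|Z'|\ge\e|Z|$ and $d_G(Z',Z^1)-d_G(Z',Z^2)>\tfrac14\d>2\e$, violating $\e$-regularity of $(Z_0,Z)$. This bounds the third kind by $\e\sum_{Z\in\Z_j}|Z|\le 4\e|A_{j,i}|$, regardless of how the $\Z$-parts straddle $A_{j,i}$. The upshot is $|A|\le\tfrac12\d|A_{j,i}|$, and then $d_F(A_{j,i},Z^2)-d_F(A_{j,i},Z^1)>0$, contradicting (\ref{eq:density}). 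The disjointness of the families $\Z_j$ across different useful $X_j$ then gives $\ge(m/12)\cdot(12\e k/m)=\e k$ irregular partners of $Z_0$. So the missing ingredient is the useful-vertex device together with the switch from a set-by-set decomposition of $A_{j,i}$ to a vertex-level count, which is what neutralizes the small-intersection terms.
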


We first deduce Theorem \ref{theo:main} from Lemma \ref{lemma:main} and then prove the lemma.

\begin{proof}[Proof of Theorem \ref{theo:main}]
Let $\Z$ be an $\e$-nice equipartition of the weighted graph $G$. Since $\Z$ is a $0$-refinement of $\X_0=V$, it follows from repeated applications of Lemma~\ref{lemma:main} that $\Z$ is an $r \cdot 8\e$-refinement of $\X_r$, for every $r \le \d/(60\cdot 8 \e)$, and in particular, for $r=s$.
We thus get that $\Z$ is a $\b$-refinement of $\X_s$ with $\b=s \cdot 8\e \leq \epsilon^{1/2} \ll 1/2$, which implies $\size{\Z}\ge\size{\X_s}/2$.
As mentioned earlier, $\size{\X_s} = \twr(\Omega(s)) \geq \twr(c/\epsilon^{1/2})$, thus proving the desired lower bound on $\size{\Z}$.
Finally, as noted earlier, it follows from Claim~\ref{lemma:weightedGraph} that there exists a (non-weighted) graph $G'$ satisfying the same conclusion, thus completing the proof.
\end{proof}

\begin{proof}[Proof of Lemma~\ref{lemma:main}]
Write $\X_{r-1}=\{X_1,\ldots,X_m\}$ and $\X_r=\{X_{i,t}\}_{i=1,t=1}^{m,M}$.
Suppose to the contrary that there exists $Z_0\in\Z$ with $Z_0\ssb X_i$ such that $Z_0\not\ss{\b+8\e} X_{i,t}$ for every $1 \leq t \leq M$.
Write $k=\size{\Z}$.
We will show that there are at least $\e k$ 
sets $Z\in\Z$ such that $(Z_0,Z)$ is $\e$-irregular.

Call a vertex $v\in X_j$ \emph{useful} if the unique $Z\in\Z$ containing $v$ satisfies $Z\ssb X_j$.
Call a set $X_j$ \emph{useful} if it contains at least $(1-12\b)\size{X_j}$ useful vertices, and moreover, the bipartition $X_i = A_{i,j} \cup B_{i,j}$ satisfies $\min(\size{Z_0\cap A_{i,j}},\size{Z_0\cap B_{i,j}}) \ge \e\size{Z_0}$.
We now show that there are at least $m/12$ useful sets $X_j\in\X_{r-1}$.
First, note that as $\Z$ is a $\b$-refinement of $\X_{r-1}$, at most $\b n$ of all vertices are non-useful.
Hence by averaging, there are at most $m/12$ sets $X_j\in \X_{r-1}$ containing more than $12\b\size{X_j}$ non-useful vertices.
Second, for each $1\le t\le M$ set $\l_t=\size{Z_0\cap X_{i,t}}/\size{Z_0\cap X_i}$. Denoting $\l=(\l_1,\ldots,\l_M)$, we have $\norm{\l}_1=1$ and, as $Z_0\not\ss{\b+8\e} X_{i,t}$ for all $t$, we have
$\norm{\l}_{\infty} < 1-8\cdot \e/(1-\b)$.
Since the sequence of bipartitions $(A_{i,j},B_{i,j})_{j=1}^m$ is (by construction) $\frac{1}{16}$-balanced, we get from Lemma~\ref{lemma:biased} (with $\z=\e/(1-\b)$) that there are at least $m/6$ values $j$ for which the  bipartition $(A_{i,j},B_{i,j})$ is such that both $\sum_{t\in A_{i,j}} \size{Z_0\cap X_{i,t}}/\size{Z_0\cap X_i}$ and $\sum_{t\in B_{i,j}} \size{Z_0\cap X_{i,t}}/\size{Z_0\cap X_i}$ are at least $\e/(1-\b)$.
For each such bipartition we have
$\min(\size{Z_0\cap A_{i,j}},\size{Z_0\cap B_{i,j}}) \ge \e\size{Z_0}$.
We conclude that there are at least $m/6-m/12=m/12$ values $j$ for which $X_j$ is useful.

Fix a useful set $X_j$. Let $\Z_j=\{Z\in\Z : Z\ssb X_j\}$. We now show that there are at least $12\epsilon k/m$ sets $Z \in \Z_j$ so that
$(Z_0,Z)$ is $\epsilon$-irregular. Together with the fact that there are at least $m/12$ useful sets $X_j$ we will thus get the required $\epsilon k$ (distinct) sets
$Z$ for which $(Z_0,Z)$ is $\epsilon$-irregular.
So suppose to the contrary that $\Z_j$ contains less than $12\epsilon k/m$ sets that together with $Z_0$ form an $\epsilon$-irregular pair.
Set $F=G_{r+1}+\cdots+G_s$, $Z^1=Z_0\cap A_{i,j}$ and $Z^2=Z_0\cap B_{i,j}$.
Since $X_j$ is useful, we have $\min(|Z^1|,|Z^2|)\geq \epsilon |Z_0|$.
Let $A \subseteq A_{j,i}$ be the set of vertices $x$ satisfying one of the following; $(i)$ $x$ it is not useful, $(ii)$ $x$ belongs to a set $Z \in \Z_j$ so that $(Z_0,Z)$ is $\epsilon$-irregular, $(iii)$ $x$ belongs to a set $Z \in \Z_j$ so that $(Z_0,Z)$ is $\epsilon$-regular and $d_F(x,Z^2)< d_F(x,Z^1)+\frac34\d$.
Since $X_j$ is useful and $|A_{j,i}|=|X_j|/2$ we infer that $A_{j,i}$ has at most $24\b\size{A_{j,i}}$ vertices satisfying~$(i)$. Our assumption on the number of $\e$-irregular pairs $(Z_0,Z)$ with $Z \in \Z_j$ implies that there at most $24\e\size{A_{j,i}}$ vertices satisfying~$(ii)$. Suppose $Z \in \Z_j$ and $(Z_0,Z)$ is $\e$-regular.
Further suppose that $Z$ contains a subset $Z' \subseteq Z$ of at least $\e\size{Z}$ vertices all of which satisfy~$(iii)$.
Since $Z'\subseteq A_{j,i}$, $Z^1\subseteq A_{i,j}$ and $Z^2\subseteq B_{i,j}$ we have by construction that $d_{G_r}(Z',Z^1)=\d$ and $d_{G_r}(Z',Z^2)=0$. Moreover, notice that\footnote{This is true for any $Z',Z_1,Z_2$ for which there are $X_i,X_j \in \X_{r-1}$ satisfying  $Z^1,Z^2 \subseteq X_i$ and $Z' \subseteq X_j$.}  $d_{G_{\ell}}(Z',Z^1)=d_{G_{\ell}}(Z',Z^2)$ for every $1 \leq \ell \leq r-1$.
Therefore, in this case we would get that
\begin{eqnarray*}
d_G(Z',Z^1)-d_G(Z',Z^2)&=&d_{G_r}(Z',Z^1)-d_{G_r}(Z',Z^2)+d_{F}(Z',Z^1)-d_{F}(Z',Z^2)\\
&>& \delta-\frac34\delta > 2\epsilon\;,
\end{eqnarray*}
contradicting the fact that $(Z_0,Z)$ is $\e$-regular. We thus get that $A_{j,i}$ contains at most $4\e\size{A_{j,i}}$ vertices satisfying~$(iii)$,
implying that altogether $\size{A} \leq (24\b+28\e)\size{A_{j,i}} \leq \frac12\d\size{A_{j,i}}$.
Note that if $x \not\in A$ then
$d_F(x,Z^2)-d_F(x,Z^1) \ge \frac34\delta$, hence we can conclude that
\begin{eqnarray*}
d_F(A_{j,i},Z^2)-d_F(A_{j,i},Z^1)&=&\frac{1}{|A_{j,i}|}\left(\sum_{x \not \in A}d_F(x,Z^2)-d_F(x,Z^1) + \sum_{x \in A}d_F(x,Z^2)-d_F(x,Z^1)\right)\\
&\ge& (1-\d/2)\frac34\delta - \d/2 > 0\;.
\end{eqnarray*}
But this contradicts (\ref{eq:density}), thus completing the proof.
\end{proof}

\end{document}